\numberwithin{equation}{section}
\newtheorem{thm}{Theorem}[section]
\newtheorem{cor}[thm]{Corollary}
\newtheorem{lem}[thm]{Lemma}
\newtheorem{prop}[thm]{Proposition}
\newtheorem{defn}[thm]{Definition}
\theoremstyle{definition}
\newtheorem{rmk}[thm]{Remark}
\newcounter{alphabet}
\newcounter{tmp}
\newcommand{\bysame}{\leavevmode\hbox to3em{\hrulefill}\,}
\begin{document}
\baselineskip=21pt
\markboth{} {}

\title[Some identities and inequalities for Hilbert-Schmidt frames]
{Some identities and inequalities for Hilbert-Schmidt frames}

\author{Anirudha Poria}

\address{Department of Mathematics,
Indian Institute of Technology Guwahati,
Guwahati 781039, India.}
\email{a.poria@iitg.ernet.in}
\keywords{Hilbert-Schmidt frame; Parseval frame; HS-frame identity.} \subjclass[2010]{Primary
 42C15; Secondary 94A12.}

\begin{abstract} 
In this paper we establish Parseval type identities and surprising new inequalities for Hilbert-Schmidt frames. Our results generalize and improve the remarkable results which have been obtained by Balan et al. and G{\u{a}}vru{\c{t}}a.  
\end{abstract}
\date{\today}
\maketitle
\def\BC{{\mathbb C}} \def\BQ{{\mathbb Q}}
\def\BR{{\mathbb R}} \def\BI{{\mathbb I}}
\def\BZ{{\mathbb Z}} \def\BD{{\mathbb D}}
\def\BP{{\mathbb P}} \def\BB{{\mathbb B}}
\def\BS{{\mathbb S}} \def\BH{{\mathbb H}}
\def\BE{{\mathbb E}}  \def\BK{{\mathbb K}}
\def\BN{{\mathbb N}}
\def\g{{\mathcal{G}_j}}
\def\gtr{{\mathcal{G}_j^*}}
\def\gt{{\mathcal{\tilde{G}}_j}}
\def\ga{{\varGamma_j}}
\vspace{-.5cm}

\section{Introduction}
The concept of a frame in Hilbert spaces has been introduced in 1952 by Duffin and Schaeffer \cite{duf52}, in the context of nonharmonic Fourier series (see \cite{you01}). After the work of Daubechies et al. \cite{dau86} frame theory got considerable attention outside signal processing and began to be more broadly studied (see \cite{cas00, chr13, gro01}). A frame for a Hilbert space is a redundant set of vectors in Hilbert space which provides non-unique representations of vectors in terms of frame elements. The redundancy and flexibility offered by frames has spurred their application in several areas of mathematics, physics, and engineering such as sigma-delta quantization \cite{ben06}, neural networks \cite{can99}, image processing \cite{can05}, system modelling \cite{dud98}, quantum measurements \cite{eld02}, sampling theory \cite{fei94}, wireless communications \cite{str03} and many other well known fields.

Throughout this paper, $\BH$ and $\BK$ are two Hilbert spaces, $\mathcal{L}(\BH)$ the algebra of all bounded linear operators on $\BH$, $I$ the identity operator on $\BH$, and $J$ is a countable index set. Now we recall the definition and some basic properties of frames in Hilbert spaces. 
\begin{defn}
A family $\{f_j: j \in J \}$ in $\BH$ is called a frame for $\BH$, if there exist constants $0 < A \leq B < \infty $ such that for all $f \in \BH$
\begin{equation}
A \Vert f \Vert^2 \leq \sum_{j \in J } \vert \langle f,f_j \rangle \vert^2 \leq  B \Vert f \Vert^2.
\end{equation}
\end{defn} 
The constants $A$ and $B$ are called $frame$ $bounds$. If $A=B$, then this frame is called an $A$-$tight$ $frame$, and if $A=B=1$, then it is called a $Parseval$ $frame$.

If $\{f_j: j \in J \}$ is a frame for $\BH$, then the following three operators are linear and bounded:\\
$Synthesis$ $operator:$ \;\; $T: l^2(J)\rightarrow \BH$,  \;\;\; $T(\{c_j\}_{j \in J})=\sum\limits_{j \in J}c_jf_j$,\\
$Analysis$ $operator:$ \;\; $T^*: \BH \rightarrow l^2(J)$, \;\;\; $T^* f=\{ \langle f,f_j \rangle\}_{j \in J} $,\\
$Frame$ $operator:$ \;\;\;\; $S:\BH \rightarrow \BH,$ \;\;\;\;\;\;\;\; $Sf=TT^*f=\sum\limits_{j \in J}\langle f,f_j \rangle f_j.$\\
Moreover, $T^*$ is the adjoint of $T$, and $S$ is a positive self-adjoint invertible operator in $\BH$. The following reconstruction formula holds for all $f \in \BH$:
\begin{equation}
f=\sum\limits_{j \in J}\langle f,f_j \rangle S^{-1}f_j=\sum\limits_{j \in J}\langle f,S^{-1}f_j \rangle f_j,
\end{equation}
where the family $\{\tilde{f_j}=S^{-1}f_j: j \in J \}$ is also a frame for $\BH$, which is called the $canonical$ $dual$ $frame$ of $\{f_j: j \in J \}$. A frame $\{g_j : j \in J \}$ for $\BH$ is called an $alternate$ $dual$ $frame$ of $\{f_j: j \in J \}$ if for all $f \in \BH$ the following equality holds: 
\begin{equation}
f=\sum\limits_{j \in J}\langle f,g_j \rangle f_j.
\end{equation}
Let $\{f_j: j \in J \}$ be a frame for $\BH$. For every $K \subset J$, we define the operator $S_K$ by 
\begin{equation}
S_Kf=\sum\limits_{j \in K}\langle f,f_j \rangle f_j,
\end{equation}
and also we denote $K^c$ as $J \setminus K.$

We refer to \cite{cas00, chr13, dau92, gro01,  han00, por16, you01} for basic results on frames and \cite{ask01, cas99,  cas04, chr03, gab03, kaf09, por15, sad12, sun06} for generalizations of frames.

In \cite{bal06}, the authors proved a longstanding conjecture of the signal processing community: a signal can be reconstructed without information about the phase. While working on efficient algorithms for signal reconstruction, Balan et al. \cite{bal07} discovered a remarkable new identity for Parseval frames, given in the following form. (We refer to \cite{bal05} for a discussion of the origins of this fundamental identity.)

\begin{thm}\label{th1}
Let $\{ f_j:j \in J \}$ be a Parseval frame for $\BH$, then for every $K\subset J$ and every $f \in \BH$, we have
\[ \sum_{j \in K} \vert \langle f,f_j \rangle \vert^2- \bigg\Vert \sum_{j \in K} \langle f,f_j \rangle f_j \bigg\Vert^2=\sum_{j \in K^c} \vert \langle f,f_j \rangle \vert^2- \bigg\Vert \sum_{j \in K^c} \langle f,f_j \rangle f_j \bigg\Vert^2. \]
\end{thm}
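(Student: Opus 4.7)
The plan is to reduce everything to the partial frame operator $S_K$ defined in equation (1.4), exploit self-adjointness, and use the Parseval assumption to relate $S_K$ and $S_{K^c}$.

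First I would rewrite both sides using $S_K$. For the first term on the left,
\[ \sum_{j\in K} |\langle f,f_j\rangle|^2 = \Big\langle \sum_{j\in K} \langle f,f_j\rangle f_j,\, f\Big\rangle = \langle S_K f, f\rangle. \]
For the second term, since $S_K$ is self-adjoint (being a sum of rank-one self-adjoint operators $f\mapsto \langle f,f_j\rangle f_j$),
\[ \Big\|\sum_{j\in K}\langle f,f_j\rangle f_j\Big\|^2 = \|S_K f\|^2 = \langle S_K^2 f, f\rangle. \]
Thus the left-hand side equals $\langle (S_K - S_K^2)f, f\rangle$, and by the same computation the right-hand side equals $\langle (S_{K^c} - S_{K^c}^2)f, f\rangle$.

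The key step is the Parseval hypothesis: since $\{f_j:j\in J\}$ is a Parseval frame, the total frame operator satisfies $S = S_K + S_{K^c} = I$, so $S_{K^c} = I - S_K$. A direct expansion gives
\[ S_{K^c} - S_{K^c}^2 = (I - S_K) - (I - S_K)^2 = (I-S_K) - (I - 2S_K + S_K^2) = S_K - S_K^2. \]
Hence $\langle (S_K - S_K^2)f, f\rangle = \langle (S_{K^c} - S_{K^c}^2)f, f\rangle$, which is the claimed identity.

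There is no real obstacle here; the proof is essentially an algebraic identity once one recognises that the quantity $\sum_{j\in K}|\langle f,f_j\rangle|^2 - \|\sum_{j\in K}\langle f,f_j\rangle f_j\|^2$ is exactly the quadratic form of $S_K - S_K^2$, and that this form is invariant under the symmetry $S_K \mapsto I - S_K$. The only mild point worth checking is the convergence/interchange implicit in treating $\sum_{j\in K}\langle f,f_j\rangle f_j$ as $S_K f$ (so that pulling $f$ inside the inner product is legitimate); this is immediate from the boundedness of the partial frame operator, which in turn follows from the upper Parseval bound $B=1$.
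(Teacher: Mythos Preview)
Your proof is correct and follows essentially the same route as the paper. The paper does not prove Theorem~\ref{th1} directly (it is quoted from \cite{bal07}), but its proof of the HS-frame analogue (the first theorem in Section~3) uses exactly the same mechanism: from $S_K+S_{K^c}=I$ one invokes the operator identity $P-P^*P=Q^*-Q^*Q$ (Lemma~\ref{lem1}), which for self-adjoint $P,Q$ is precisely your computation $S_K-S_K^2=S_{K^c}-S_{K^c}^2$, and then reads off the equality of quadratic forms.
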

\begin{thm}\label{th001}
If $\{ f_j:j \in J \}$ be a Parseval frame for $\BH$, then for every $K\subset J$ and every $f \in \BH$, we have
\[ \sum_{j \in K} \vert \langle f,f_j \rangle \vert^2 + \bigg\Vert \sum_{j \in K^c} \langle f,f_j \rangle f_j \bigg\Vert^2 \geq \frac{3}{4} \Vert f \Vert^2. \]
\end{thm}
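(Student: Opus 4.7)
The plan is to reduce the inequality to an elementary completion-of-square argument, bypassing Theorem \ref{th1} entirely. The Parseval hypothesis means the global frame operator equals $I$, so the partial frame operators satisfy $S_K + S_{K^c} = I$ in $\mathcal{L}(\BH)$. In particular,
\[
\sum_{j \in K} |\langle f, f_j\rangle|^2 \;=\; \langle S_K f, f\rangle \;=\; \|f\|^2 - \langle S_{K^c} f, f\rangle,
\]
while the second term on the left is just $\|S_{K^c} f\|^2$ by definition of $S_{K^c}$. Hence the left-hand side of the claim becomes
\[
\|f\|^2 - \langle S_{K^c} f, f\rangle + \|S_{K^c} f\|^2.
\]

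Next I would exploit the fact that $S_{K^c}$ is self-adjoint, so $\langle S_{K^c} f, f\rangle \in \BR$. Completing the square produces
\[
\|S_{K^c} f\|^2 - \langle S_{K^c} f, f\rangle + \tfrac{1}{4}\|f\|^2 \;=\; \bigl\|S_{K^c} f - \tfrac{1}{2} f\bigr\|^2 \;\geq\; 0,
\]
so the rewritten left-hand side is at least $\|f\|^2 - \tfrac{1}{4}\|f\|^2 = \tfrac{3}{4}\|f\|^2$, as desired.

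The single conceptual insight is choosing the shift $\tfrac12 f$; equivalently, writing $P := S_K$, the quantity in question is $\langle (I - P + P^2) f, f\rangle$, and the scalar function $\lambda \mapsto 1 - \lambda + \lambda^2$ on the spectrum $\sigma(P) \subset [0,1]$ has minimum value $\tfrac34$ at $\lambda = \tfrac12$. I do not expect a serious obstacle: the only structural fact used is $S_K + S_{K^c} = I$, and everything else is algebraic. When generalizing to HS-frames later in the paper, I anticipate the same template (compute $\sum_K \|\langle f, \g\rangle\|_{HS}^2$ via a Hilbert--Schmidt partial frame operator, then complete the square) should carry through.
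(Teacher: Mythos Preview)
Your argument is correct and rests on the same idea the paper uses (for the HS-frame analog, Corollary~\ref{th002}): the operator completion of square $(S_{K^c}-\tfrac12 I)^2\ge 0$, equivalently that $t\mapsto 1-t+t^2$ has minimum $\tfrac34$ on $[0,1]$. The only difference is packaging. The paper first proves $S_K^2+S_{K^c}^2=2(S_K-\tfrac12 I)^2+\tfrac12 I\ge \tfrac12 I$, adds $S_K+S_{K^c}=I$, and then invokes Lemma~\ref{lem1} to obtain the symmetry $S_K+S_{K^c}^2=S_{K^c}+S_K^2$ before halving; you instead complete the square directly on the quadratic form $\|S_{K^c}f-\tfrac12 f\|^2\ge 0$ and avoid the detour through Lemma~\ref{lem1}. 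Your route is shorter, and your closing remark about the HS-frame case is accurate: the paper's proof of Corollary~\ref{th002} uses nothing beyond $S_K+S_{K^c}=I$ and self-adjointness, so your template transfers verbatim.
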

Because of the importance of Parseval frames in applications, particularly to signal processing the authors in \cite{bal07} mainly focused on Parseval frames and proved several interesting variants of Theorem \ref{th1}. In fact, the identity that appears in Theorem \ref{th1} was obtained in \cite{bal07} as a particular case of the following result for general frames.
\begin{thm}\label{th01}
Let $\{ f_j:j \in J \}$ be a frame for $\BH$ with canonical dual frame $\{ \tilde{f_j}: j \in J \}$. Then for every $K\subset J$ and every $f \in \BH$, we have
\[ \sum_{j \in K} \vert \langle f,f_j \rangle \vert^2- \sum_{j \in J} \vert \langle S_K f, \tilde{f_j}  \rangle \vert^2 = \sum_{j \in K^c} \vert \langle f,f_j \rangle \vert^2- \sum_{j \in J} \vert \langle S_{K^c} f, \tilde{f_j}  \rangle \vert^2 .\]
\end{thm}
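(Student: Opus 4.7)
The plan is to convert both sides of the identity into operator-theoretic expressions involving the frame operator $S$ and the partial frame operator $S_K$, and then verify the resulting equality by a short self-adjoint algebra calculation.

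First, I would invoke two standard identities. Directly from the definition of $S_K$,
\[
\sum_{j \in K} |\langle f, f_j\rangle|^2 = \langle S_K f, f\rangle,
\]
while for the sums involving the canonical dual, using $\tilde f_j = S^{-1} f_j$ and the self-adjointness of $S^{-1}$,
\[
\sum_{j \in J} |\langle g, \tilde f_j\rangle|^2 = \sum_{j \in J} |\langle S^{-1}g, f_j\rangle|^2 = \langle S S^{-1} g, S^{-1} g\rangle = \langle S^{-1} g, g\rangle.
\]
Applied with $g = S_K f$ and $g = S_{K^c} f$, the claimed identity reduces to
\[
\langle S_K f, f\rangle - \langle S^{-1} S_K f, S_K f\rangle = \langle S_{K^c} f, f\rangle - \langle S^{-1} S_{K^c} f, S_{K^c} f\rangle.
\]

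The remainder is a direct algebraic check. Setting $T := S_K$ so that $S_{K^c} = S - T$, I would expand $\langle S^{-1}(S-T)f, (S-T)f\rangle$ using the self-adjointness of $S$, $S^{-1}$, and $T$. The cross term $\langle S^{-1} T f, S f\rangle$ collapses to $\langle T f, f\rangle$ because $S^{-1}S = I$, and after cancellation everything on the right-hand side except $\langle T f, f\rangle - \langle S^{-1} T f, T f\rangle$ vanishes, which is exactly the left-hand side. I do not anticipate a substantial obstacle; the entire content of the theorem is concentrated in the dual-frame identity above, and it is precisely the use of the \emph{canonical} dual (whose frame operator is $S^{-1}$) that turns the cross term into the symmetric quantity $\langle T f, f\rangle$ and lets the two sides merge.
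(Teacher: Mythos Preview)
Your argument is correct: the reductions $\sum_{j\in K}|\langle f,f_j\rangle|^2=\langle S_Kf,f\rangle$ and $\sum_{j\in J}|\langle g,\tilde f_j\rangle|^2=\langle S^{-1}g,g\rangle$ are exactly right, and the final algebraic check with $T=S_K$, $S_{K^c}=S-T$ goes through as you describe (all inner products involved are real since $S$, $S^{-1}$, $S_K$ are positive self-adjoint, so there is no conjugate-symmetry subtlety).

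Note, however, that the paper does not actually prove this theorem; it is quoted from \cite{bal07} as background. The closest thing to a ``paper's own proof'' is the argument for the HS-frame analogue (Theorem~\ref{th02}), and there the route is slightly different from yours. Rather than expanding $\langle S^{-1}(S-T)f,(S-T)f\rangle$ directly, the paper first symmetrizes: it sets $P=S^{-1/2}S_KS^{-1/2}$ and $Q=S^{-1/2}S_{K^c}S^{-1/2}$, so that $P+Q=I$ with $P,Q$ self-adjoint, applies a one-line operator lemma of the form $P-P^2=Q-Q^2$ (Lemma~\ref{lem1}/Proposition~\ref{pro1}), and then evaluates at $S^{1/2}f$. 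Your approach stays with $S_K$ and $S$ and does the cancellation by hand; the paper's approach packages the cancellation into a lemma about operators summing to the identity, at the cost of introducing $S^{\pm 1/2}$. Both are short; the paper's version has the advantage that the same lemma immediately yields the accompanying lower bound (the $\tfrac34$-inequality and its $\lambda$-refinement), which your direct expansion does not give without a separate computation.
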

The following results, which were obtained in \cite{gua06}, generalize Theorems \ref{th1} and \ref{th001} to canonical and alternate dual frames:
\begin{thm}\label{th05}
Let $\{ f_j:j \in J \}$ be a frame for $\BH$ with canonical dual frame $\{ \tilde{f_j}: j \in J \}$. Then for every $K\subset J$ and every $f \in \BH$, we have
\[ \sum_{j \in K} \vert \langle f,f_j \rangle \vert^2+\sum_{j \in J} \vert \langle S_{K^c} f, \tilde{f_j}  \rangle \vert^2= \sum_{j \in K^c} \vert \langle f,f_j \rangle \vert^2 +\sum_{j \in J} \vert \langle S_K f, \tilde{f_j}  \rangle \vert^2 \geq \frac{3}{4} \sum_{j \in J}|\langle f,f_j \rangle|^2  .\]
\end{thm}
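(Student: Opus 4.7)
The plan is to separate the statement into its equality part and its lower-bound part, and dispatch each independently. The equality is essentially at hand: rearranging the identity of Theorem \ref{th01} by moving the two dual-frame sums to opposite sides produces exactly the first equation of the statement, so no new argument is needed there.

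For the bound $\geq \tfrac{3}{4}\sum_{j\in J}|\langle f,f_j\rangle|^2$, my plan is to recast everything in operator-theoretic form and then perform a change of variables using $S^{1/2}$. Two translations drive this. First, $\sum_{j\in K}|\langle f,f_j\rangle|^2 = \langle S_K f, f\rangle$ by the definition of $S_K$. Second, because $\tilde{f_j}=S^{-1}f_j$, pulling $S^{-1}$ through the inner product and applying the standard identity $\sum_{j\in J}|\langle u,f_j\rangle|^2=\langle Su,u\rangle$ to $u=S^{-1}h$ yields $\sum_{j\in J}|\langle h,\tilde{f_j}\rangle|^2 = \langle h, S^{-1}h\rangle$ for every $h\in\BH$. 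Taking $h=S_{K^c}f$ thus converts the left side of the target inequality into $\langle S_K f,f\rangle + \langle S_{K^c}f, S^{-1}S_{K^c}f\rangle$, while the right side equals $\tfrac{3}{4}\langle Sf,f\rangle$.

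Now substitute $g = S^{1/2}f$ and introduce the self-adjoint operator $P := S^{-1/2}S_K S^{-1/2}$. Since $S_K + S_{K^c}=S$, we have $S^{-1/2}S_{K^c}S^{-1/2}=I-P$, and a short calculation converts $\langle S_K f, f\rangle$ to $\langle Pg,g\rangle$, the term $\langle S_{K^c}f,S^{-1}S_{K^c}f\rangle$ to $\|(I-P)g\|^2$, and $\langle Sf,f\rangle$ to $\|g\|^2$. The target inequality becomes
\[\langle Pg, g\rangle + \|(I-P)g\|^2 \geq \tfrac{3}{4}\|g\|^2,\]
which, after expanding the squared norm and using $\|Pg\|^2 = \langle P^2 g, g\rangle$ (valid because $P$ is self-adjoint), rearranges to $\langle(P - \tfrac{1}{2}I)^2 g, g\rangle \geq 0$. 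This holds automatically, since $(P-\tfrac{1}{2}I)^2$ is positive.

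The only genuine choice in the argument is the substitution $g=S^{1/2}f$, which linearizes the problem by trading the two distinct operators $S_K$ and $S^{-1}$ for a single positive operator $P$ and its complement $I-P$. Once that substitution is made, the inequality is literally a completion of square, and I expect no further obstacle.
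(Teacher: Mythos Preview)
Your proposal is correct and follows essentially the same route as the paper: the decisive substitution $g=S^{1/2}f$ with $P=S^{-1/2}S_KS^{-1/2}$ (so that $I-P=S^{-1/2}S_{K^c}S^{-1/2}$) is exactly what the paper uses in the proof of Theorem~\ref{th02}, and your completion of the square $(P-\tfrac{1}{2}I)^2\ge 0$ is precisely the $\lambda=\tfrac{1}{2}$ case of Proposition~\ref{pro1}, which in general completes the square as $(P-\lambda I)^2\ge 0$ to obtain the bound $1-(\lambda-1)^2$. The only cosmetic difference is that you invoke Theorem~\ref{th01} for the equality and treat the inequality separately, whereas the paper's Proposition~\ref{pro1} establishes both simultaneously from the single operator identity.
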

\begin{thm}\label{th2}
Let $\{ f_j:j \in J \}$ be a frame for $\BH$ and $\{ g_j:j \in J \}$ be an alternate dual frame of  $\{ f_j:j \in J \}$, then for every $K\subset J$ and every $f \in \BH$, we have
\begin{eqnarray*}
 Re \bigg( \sum_{j \in K} \langle f,g_j \rangle \overline{\langle f, f_j\rangle} \bigg)+ \bigg\Vert \sum_{j \in K^c} \langle f,g_j \rangle f_j \bigg\Vert^2  &=& Re \bigg( \sum_{j \in K^c} \langle f,g_j \rangle \overline{\langle f, f_j\rangle} \bigg)+ \bigg\Vert \sum_{j \in K} \langle f,g_j \rangle f_j \bigg\Vert^2 \\ 
 & \geq & \frac{3}{4} \Vert f \Vert^2. 
\end{eqnarray*} 
\end{thm}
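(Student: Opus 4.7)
The plan is to recast both sides of the asserted equation in terms of the single bounded operator $T_K f := \sum_{j\in K}\langle f,g_j\rangle f_j$. Because $\{g_j\}$ is an alternate dual frame of $\{f_j\}$, the reconstruction formula (1.3) gives $T_K + T_{K^c} = I$. Swapping sum and inner product yields
\[
\sum_{j\in K}\langle f,g_j\rangle\overline{\langle f,f_j\rangle} = \Bigl\langle \sum_{j\in K}\langle f,g_j\rangle f_j,\, f\Bigr\rangle = \langle T_K f,f\rangle,
\]
and likewise for $K^c$. Hence the whole theorem reduces to the single claim
\[
\mathrm{Re}\langle T_K f,f\rangle + \|T_{K^c}f\|^2 = \mathrm{Re}\langle T_{K^c}f,f\rangle + \|T_K f\|^2 \geq \tfrac34\|f\|^2.
\]

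For the equality, I would substitute $T_{K^c}=I-T_K$ on both sides. Expanding
\[
\|T_{K^c}f\|^2 = \|f\|^2 - 2\mathrm{Re}\langle T_K f,f\rangle + \|T_K f\|^2
\]
turns the left-hand side into $\|f\|^2 - \mathrm{Re}\langle T_K f,f\rangle + \|T_K f\|^2$. The symmetric substitution, with $K$ and $K^c$ interchanged, gives the identical expression for the right-hand side, which proves the equality.

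For the inequality, the common value just found equals $\|f\|^2 + \bigl(\|T_K f\|^2 - \mathrm{Re}\langle T_K f,f\rangle\bigr)$, so it suffices to establish $\|T_K f\|^2 + \tfrac14\|f\|^2 \geq \mathrm{Re}\langle T_K f,f\rangle$. This follows by combining AM-GM with Cauchy-Schwarz:
\[
\|T_K f\|^2 + \tfrac14\|f\|^2 \;\geq\; \|T_K f\|\,\|f\| \;\geq\; |\langle T_K f, f\rangle| \;\geq\; \mathrm{Re}\langle T_K f,f\rangle,
\]
which is nothing but the completion of squares $\bigl(\|T_K f\|-\tfrac12\|f\|\bigr)^2\geq 0$.

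I do not expect a real obstacle: once the operator-level reformulation is in place, the identity collapses to the elementary fact $\|T_K f\|^2-\|(I-T_K)f\|^2 = 2\mathrm{Re}\langle T_K f,f\rangle-\|f\|^2$, and the $3/4$-constant is exactly the same quadratic completion that drives the corresponding bounds in Theorems \ref{th001} and \ref{th05}. The only point worth checking carefully is that $T_K$ extends to a bounded operator on $\BH$ for arbitrary $K\subset J$; this is immediate from the boundedness of the analysis operator of $\{g_j\}$ and the synthesis operator of $\{f_j\}$, so no convergence issue really arises.
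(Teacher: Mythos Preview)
Your proof is correct, and it follows the same strategy the paper uses: reformulate everything via the bounded operators $T_K,\,T_{K^c}=I-T_K$ and then exploit the identity $P+Q=I$. The paper does not prove Theorem~\ref{th2} directly (it is quoted from \cite{gua06}), but its own argument for the HS-frame generalization (Proposition~\ref{prop2} and Theorem~\ref{th03}) specializes at $\lambda=\tfrac12$ to exactly this computation. The only cosmetic difference is in the inequality step: the paper completes the square at the operator level, observing $(P-\tfrac12 I)^{*}(P-\tfrac12 I)\ge 0$, which for $f\in\BH$ reads $\|T_Kf-\tfrac12 f\|^2\ge 0$ and gives $\|T_Kf\|^2+\tfrac14\|f\|^2\ge \mathrm{Re}\langle T_Kf,f\rangle$ in one stroke; you reach the same bound by chaining AM--GM with Cauchy--Schwarz, which is a slightly looser route (it passes through $\|T_Kf\|\,\|f\|$) but lands at the same conclusion.
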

Motivated by these interesting results, the authors in \cite{zhu10} generalized Theorem \ref{th2} to a form that does not involve the real parts of the complex numbers, which is given below.
\begin{thm}\label{th3}
Let $\{ f_j:j \in J \}$ be a frame for $\BH$ and $\{ g_j:j \in J \}$ be an alternate dual frame of  $\{ f_j:j \in J \}$. Then for every $K\subset J$ and every $f \in \BH$, we have
\[  \bigg( \sum_{j \in K} \langle f,g_j \rangle \overline{\langle f, f_j\rangle} \bigg)- \bigg\Vert \sum_{j \in K} \langle f,g_j \rangle f_j \bigg\Vert^2 = \overline{\bigg( \sum_{j \in K^c} \langle f,g_j \rangle \overline{\langle f, f_j\rangle} \bigg)}- \bigg\Vert \sum_{j \in K^c} \langle f,g_j \rangle f_j \bigg\Vert^2. \]
\end{thm}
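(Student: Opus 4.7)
\bigskip
\noindent
\textbf{Proof plan.} The plan is to reduce both sides of the identity to a single cross term, using only the splitting of $f$ provided by the alternate dual. Set
\[ u_K := \sum_{j \in K} \langle f, g_j \rangle f_j, \qquad u_{K^c} := \sum_{j \in K^c} \langle f, g_j \rangle f_j. \]
The defining relation $f = \sum_{j \in J} \langle f, g_j \rangle f_j$ of an alternate dual immediately gives the clean splitting $f = u_K + u_{K^c}$, and this is the only structural input the argument requires.

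The next step is to reinterpret the mixed sums appearing in the theorem as inner products. Using $\overline{\langle f, f_j \rangle} = \langle f_j, f \rangle$ and linearity in the first argument,
\[ \sum_{j \in K} \langle f, g_j \rangle \overline{\langle f, f_j \rangle} = \Big\langle \sum_{j \in K} \langle f, g_j \rangle f_j,\, f \Big\rangle = \langle u_K, f \rangle, \]
and conjugating the corresponding $K^c$-sum turns it into $\overline{\langle u_{K^c}, f \rangle} = \langle f, u_{K^c} \rangle$. Combined with $\|u_K\|^2 = \langle u_K, u_K \rangle$ and $\|u_{K^c}\|^2 = \langle u_{K^c}, u_{K^c} \rangle$, the identity to be proved becomes
\[ \langle u_K, f \rangle - \langle u_K, u_K \rangle \;=\; \langle f, u_{K^c} \rangle - \langle u_{K^c}, u_{K^c} \rangle. \]

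I would finish by substituting $f = u_K + u_{K^c}$ into both sides and expanding bilinearly. The left side becomes $\langle u_K, u_K + u_{K^c} \rangle - \langle u_K, u_K \rangle = \langle u_K, u_{K^c} \rangle$, while the right side becomes $\langle u_K + u_{K^c}, u_{K^c} \rangle - \langle u_{K^c}, u_{K^c} \rangle = \langle u_K, u_{K^c} \rangle$; thus both sides equal the cross term $\langle u_K, u_{K^c} \rangle$.

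There is no serious obstacle; the one point worth highlighting in the write-up is the role of the conjugate on exactly one of the two mixed sums. Dropping it would leave a comparison of $\langle u_K, f \rangle - \|u_K\|^2$ with $\langle u_{K^c}, f \rangle - \|u_{K^c}\|^2$, whose difference after the substitution $f = u_K + u_{K^c}$ is $\langle u_K, u_{K^c} \rangle - \langle u_{K^c}, u_K \rangle = 2i\,\mathrm{Im}\,\langle u_K, u_{K^c} \rangle$, generally nonzero. The conjugation swaps $\langle u_{K^c}, f \rangle$ with $\langle f, u_{K^c} \rangle$ and thereby removes this imaginary obstruction; this is precisely the refinement of Theorem \ref{th2} that Theorem \ref{th3} captures, since it keeps the full complex information instead of only the real part.
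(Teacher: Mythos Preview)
Your argument is correct and is essentially the approach the paper presents: Lemma~\ref{lem1} (taken from \cite{zhu10}) records the operator identity $P-P^{*}P=Q^{*}-Q^{*}Q$ whenever $P+Q=I$, and applying it with $Pf=u_K$, $Qf=u_{K^c}$ yields exactly $\langle u_K,f\rangle-\|u_K\|^{2}=\langle f,u_{K^c}\rangle-\|u_{K^c}\|^{2}$ (this is also how the paper proves the HS-frame analogue, Theorem~\ref{th5}, via the equivalent Lemma~\ref{lem2}). You have simply carried out that same computation at the vector level, arriving at the common value $\langle u_K,u_{K^c}\rangle$ directly rather than packaging it as an operator lemma.
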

Moreover, the authors in \cite{li08, xia08} have extended Theorem \ref{th01} for g-frames and canonical dual g-frames in Hilbert spaces. Also, the authors in \cite{yan09} have established an equality and an inequality for the alternate dual g-frame. Further, in \cite{li12}, the authors generalized the equality and inequality for g-frame to a g-Bessel sequence in Hilbert spaces. 

In this paper, we generalize the above mentioned results for Hilbert-Schmidt frames. Also, we generalize the above inequalities to a more general form which involve a scalar $ \lambda \in [0,1]$. As a particular case, for $\lambda = 1/2$, the above inequalities can be obtained. Since g-frames can be considered as a class of Hilbert-Schmidt frames, the previous equality and inequalities on g-frames can be obtained as a special case of the results we establish on Hilbert-Schmidt frames. The paper is organized in the following way: In section 2, we provide some necessary background on Hilbert-Schmidt frame and then, in section 3, we state and prove the main results. 

\section{Hilbert-Schmidt frames}
Let us denote $\{ \BK_j: j \in J \} \subset \BK$ as a sequence of Hilbert spaces and $\mathcal{L}(\BH,\BK_j)$ the collection of all bounded linear operators from $\BH$ to $\BK_j.$ The notion of a frame extended to g-frame by W. Sun \cite{sun06}. First we recall the definition of a g-frame. 
\begin{defn} \cite{sun06}
A family $\{ \Lambda_j \in \mathcal{L}(\BH,\BK_j):j \in J \}$ is called a generalized frame, or simply a g-frame, for $\BH$ with respect to $\{ \BK_j: j \in J \}$ if there are two constants $A, B>0$ such that for all $f \in \BH$
\begin{equation}
A \Vert f \Vert^2 \leq \sum_{j \in J }  \Vert \Lambda_j(f)  \Vert^2 \leq  B \Vert f \Vert^2.
\end{equation} 
\end{defn}
For more details on g-frames see \cite{sun06}.
Let $\mathcal{L}(\BH)$ denote the $C^*$-algebra of all bounded linear operators on a complex separable Hilbert space $\BH$. For a compact operator $T \in \mathcal{L}(\BH)$, the eigenvalues of the positive operator $\vert T \vert = (T^*T)^{1/2}$ are called the singular values of $T$ and denoted by $s_j(T)$. We arrange the singular values $s_j(T)$ in a decreasing order and repeated according to multiplicity, that is, $s_1(T)\geq s_2(T) \geq ... \geq 0$. For $1 \leq p < \infty$, the von Neumann-Schatten p-class $C_p$ is defined to be the set of all compact operators $T$ for which 
\begin{equation}
\Vert T \Vert_p=(\tau \vert T \vert^p)^{\frac{1}{p}}=\bigg( \sum_{j=1}^{\infty} s_j^p(T) \bigg)^{\frac{1}{p}} < \infty,
\end{equation} 
where $\tau$ is the usual trace functional defined as $\tau(T)=\sum\limits_{e \in E}\langle T(e),e \rangle$, and $E$ is any orthonormal basis of $\BH$. For $p=\infty$, let $C_{\infty}$ denote the class of all compact operators with $\Vert T \Vert_{\infty}=s_1(T)< \infty$. For more information about a von Neumann-Schatten ${p}$-class see \cite{rin71, sim79}. We recall that $C_2$ is a Banach space with respect to $\Vert . \Vert_2$, and also it is a Hilbert space with the inner product defined by $\big[ T,S \big]_{\tau}=\tau(S^*T)$.
\begin{defn} \cite{sad12}
A family $\{\g : j \in J\}$ of bounded linear operators from $\BH$ to $C_2 \subseteq \mathcal{L}(\BK)$ is said to be a Hilbert-Schmidt frame, or simply a HS-frame for $\BH$ with respect to $\BK$, if there exist constants $A, B >0$ such that for all $f \in \BH$
\begin{equation}
A \Vert f \Vert^2 \leq \sum_{j \in J }  \Vert \g(f)  \Vert_2^2 \leq  B \Vert f \Vert^2.
\end{equation}
\end{defn}
If $A=B=1$, then $\{\g : j \in J\}$ is called the Parseval HS-frame for $\BH$ with respect to $\BK$. The HS-frame operator $S:\BH \rightarrow \BH$ is defined by $Sf=\sum\limits_{j \in J} \gtr\g(f)$, $f \in \BH$, where $\gtr$ is the adjoint operator of $\g$. If $\{\g : j \in J\}$ is a HS-frame, then S is a bounded, invertible, self-adjoint and positive operator. Also, the following reconstruction formula holds for all $f \in \BH$ 
\begin{equation}
f=SS^{-1}f=S^{-1}Sf=\sum_{j \in J} \gtr \g S^{-1}f=\sum\limits_{j \in J} S^{-1}\gtr \g f.
\end{equation}
We call $\{\gt = \g S^{-1}: j \in J \}$ the canonical dual HS-frame of $\{\g : j \in J\}$. A HS-frame $\{\ga : j \in J\}$ is called an alternate dual HS-frame of $\{\g : j \in J \}$ if for all $f \in \BH$ the following identity holds: 
\begin{equation}\label{eq01}
f=\sum\limits_{j \in J}\gtr \ga f=\sum\limits_{j \in J} \varGamma_j^{*}\g f .
\end{equation}

Let $\{\g : j \in J\}$ be a HS-frame. For every $K\subset J$, define the bounded linear operators $S_K, S_{K^c}: \BH \rightarrow \BH$ by \[ S_Kf=\sum_{j \in K} \gtr \g(f), \;\;\; S_{K^c}f=\sum_{j \in K^c} \gtr \g(f).\]
It is easy to check that $S_K$ and $S_{K^c}$ are self-adjoint. 
\begin{lem}\label{lem00}
\cite{sad12} Let $\{\Lambda_j :j \in J\}$ be a g-frame for $\BH$ with respect to $\{ \BK_j: j \in J \}$. Then $\{\Lambda_j :j \in J\}$ is a HS-frame for $\BH$ with respect to $\BK=\bigoplus\limits_{j \in J}\BK_j.$
\end{lem}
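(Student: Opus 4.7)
The plan is to turn each $\Lambda_j$ into a rank-one Hilbert--Schmidt operator on the ambient space $\BK=\bigoplus_{j\in J}\BK_j$ in a norm-preserving way, after which the g-frame inequality passes straight through to the HS-frame inequality. Concretely, I would denote by $\iota_j\colon\BK_j\hookrightarrow\BK$ the canonical isometric inclusion, fix any unit vector $e\in\BK$, and, for each $j\in J$, define $\g\colon\BH\to\mathcal{L}(\BK)$ by the rank-one rule
\[
\g(f)(w)\;=\;\langle w,e\rangle\,\iota_j(\Lambda_j f),\qquad w\in\BK.
\]

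Then three easy checks would complete the argument. First, $\g(f)$ has rank at most one, so automatically $\g(f)\in C_2$. Second, the Hilbert--Schmidt norm of a rank-one operator $u\otimes v$ equals $\|u\|\,\|v\|$, which here gives
\[
\|\g(f)\|_2 \;=\; \|\iota_j(\Lambda_j f)\|\,\|e\| \;=\; \|\Lambda_j f\|,
\]
the last equality using that $\iota_j$ is an isometry and $\|e\|=1$. Third, $f\mapsto\g(f)$ is linear because $\Lambda_j$ is, and the bound $\|\g(f)\|_2\le\|\Lambda_j\|\,\|f\|$ places $\g$ in $\mathcal{L}(\BH,C_2)$. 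Summing the pointwise identity $\|\g(f)\|_2^{2}=\|\Lambda_j f\|^2$ over $j$ then transports the g-frame inequality $A\|f\|^2\le\sum_{j\in J}\|\Lambda_j f\|^2\le B\|f\|^2$ verbatim into the HS-frame inequality with the same bounds $A$ and $B$.

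I do not expect a real obstacle. The only conceptual point is recognising that a vector-valued map taking values in the component $\BK_j$ can be encoded, without loss of norm, as an operator on the ambient space $\BK$ by tensoring against a fixed unit vector; once that observation is made, everything reduces to the rank-one identity $\|u\otimes v\|_2=\|u\|\,\|v\|$ and to the isometric nature of the direct-sum inclusion. The choice of $e$ is immaterial, so there is no hidden dependence to worry about, and the whole argument is essentially a one-line reinterpretation of $\Lambda_j$ inside $C_2(\BK)$.
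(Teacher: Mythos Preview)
The paper does not supply its own proof of this lemma; it is simply quoted from \cite{sad12}, so there is nothing in the text to compare against. Your argument is correct and self-contained: encoding $\Lambda_j f\in\BK_j$ as the rank-one operator $\iota_j(\Lambda_j f)\otimes e$ on $\BK$ is a clean way to land in $C_2(\BK)$, and the standard identity $\|u\otimes v\|_2=\|u\|\,\|v\|$ together with the isometry of $\iota_j$ gives $\|\g(f)\|_2=\|\Lambda_j f\|$, after which the g-frame bounds transfer verbatim with the same constants $A,B$. The only cosmetic point is that the lemma, as phrased, literally asserts that the \emph{same} family $\{\Lambda_j\}$ is an HS-frame, which is a mild abuse of notation since $\Lambda_j$ does not take values in $C_2(\BK)$; your construction makes the intended identification explicit, so nothing is missing.
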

In \cite{sun06}, W. Sun has shown that bounded quasi-projectors \cite{for04}, frames of subspaces \cite{cas04}, pseudo-frames \cite{li04}, oblique frames \cite{chr04}, outer frames \cite{ald04}, and time-frequency localization operators \cite{dor06} are special classes of g-frames. Hence, Lemma \ref{lem00} implies that each of these classes is also a class of HS-frames.

\section{The main results and their proofs}
We first state a simple result on operators, which can be found in \cite{zhu10}. 
\begin{lem}\label{lem1}
If $P,Q \in \mathcal{L}(\BH)$ satisfying $P+Q=I$, then $P-P^{*}P=Q^*-Q^*Q.$
\end{lem}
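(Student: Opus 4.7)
The plan is to simply substitute $Q = I - P$ (which follows immediately from the hypothesis $P + Q = I$) into the right-hand side and expand. Taking adjoints of $Q = I - P$ gives $Q^* = I - P^*$, and then
\[
Q^* - Q^*Q = (I - P^*) - (I - P^*)(I - P).
\]
Expanding the product $(I - P^*)(I - P) = I - P - P^* + P^*P$ and collecting terms, the $I$ and $P^*$ contributions cancel, leaving exactly $P - P^*P$.

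There is no real obstacle here; the content of the lemma is a one-line algebraic identity in $\mathcal{L}(\mathbb{H})$, relying only on bilinearity of composition and the fact that taking adjoints is conjugate-linear and reverses order (which is automatic since we are only adjointing $I$ and $P$ individually). I would present the computation in a single displayed equation with parenthetical regrouping, so that the reader sees $Q^* - Q^*Q$ transform in one step into $P - P^*P$, and then conclude.
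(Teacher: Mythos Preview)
Your argument is correct and essentially identical to the paper's: both verify the identity by direct algebra using $Q=I-P$ and $Q^*=I-P^*$. The only cosmetic difference is that the paper factors rather than expands, writing $P-P^*P=(I-P^*)P=Q^*(I-Q)=Q^*-Q^*Q$ in one line.
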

\begin{proof}
We compute $P-P^{*}P=(I-P^{*})P=Q^*(I-Q)=Q^*-Q^*Q.$
\end{proof}
Now we state and prove a Parseval HS-frame identity.
\begin{thm}
Let $\{ \g : j \in J \}$ be a Parseval HS-frame for $\BH$ with respect to $\BK$. Then for all $K \subset J$ and all $f \in \BH$, we have
\[ \sum_{j \in K}\Vert \g(f) \Vert^2 - \bigg\Vert \sum_{j \in K} \gtr \g f \bigg\Vert^2 = \sum_{j \in K^c}\Vert \g(f) \Vert^2 - \bigg\Vert \sum_{j \in K^c} \gtr \g f \bigg\Vert^2. \]
\end{thm}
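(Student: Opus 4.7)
The plan is to reduce this Parseval identity to the operator identity in Lemma \ref{lem1}, applied to the partial frame operators $S_K$ and $S_{K^c}$, and then recover the scalar identity by taking an inner product with $f$.

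First I would observe that since $\{\g : j \in J\}$ is a Parseval HS-frame, the frame operator $S$ equals the identity, and so
\[ S_K + S_{K^c} = S = I. \]
The excerpt already notes that both $S_K$ and $S_{K^c}$ are self-adjoint. Applying Lemma \ref{lem1} with $P = S_K$ and $Q = S_{K^c}$ therefore yields
\[ S_K - S_K^2 = S_{K^c} - S_{K^c}^2, \]
since self-adjointness lets me identify $P-P^*P$ with $S_K-S_K^2$ and $Q^*-Q^*Q$ with $S_{K^c}-S_{K^c}^2$.

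Next I would pair both sides of this operator identity with $f$ in the inner product of $\BH$. For the linear part, since $\gtr$ is the adjoint of $\g$ (as a bounded operator $\BH \to C_2$), I have
\[ \langle S_K f, f\rangle = \sum_{j \in K}\langle \gtr \g f, f\rangle = \sum_{j \in K}[\g f, \g f]_\tau = \sum_{j \in K}\Vert \g(f)\Vert_2^2, \]
and identically with $K^c$ in place of $K$. For the quadratic part, self-adjointness of $S_K$ gives
\[ \langle S_K^2 f, f\rangle = \langle S_K f, S_K f\rangle = \Bigl\Vert \sum_{j \in K}\gtr \g f\Bigr\Vert^2, \]
and again the same with $K$ replaced by $K^c$.

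Substituting these four computations into $\langle (S_K - S_K^2)f,f\rangle = \langle (S_{K^c} - S_{K^c}^2)f,f\rangle$ produces precisely the claimed identity, so no further estimation is needed. There is no real obstacle here: the only step that requires a small check is that the sums defining $S_K f$ and $\sum_{j\in K}\Vert \g(f)\Vert_2^2$ converge unconditionally (which follows from the Parseval HS-frame bound), so that rearrangement into $K$ and $K^c$ is legitimate.
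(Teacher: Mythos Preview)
Your proof is correct and follows essentially the same approach as the paper: both reduce the identity to Lemma \ref{lem1} applied to the self-adjoint operators $S_K$ and $S_{K^c}$ (which sum to $I$ in the Parseval case), then take the inner product with $f$ and interpret the resulting terms via the definitions of $S_K$, $S_{K^c}$. Your write-up is in fact slightly more explicit than the paper's, spelling out the adjoint computation $\langle S_Kf,f\rangle=\sum_{j\in K}\Vert\g(f)\Vert_2^2$ and the convergence remark.
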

\begin{proof}
Since $\{ \g : j \in J \}$ is a Parseval HS-frame, the corresponding frame operator $S=I$, and hence $S_K+S_{K^c}=I$. Note that $S_{K^c}$ is a self-adjoint operator, and therefore $S_{K^c}^*=S_{K^c}$. Applying Lemma \ref{lem1} to the operators $S_K$ and $S_{K^c}$, we obtain that for all $f \in \BH$
\begin{eqnarray*}
&& \langle S_K f,f \rangle - \langle S_K^{*}S_K f,f \rangle =\langle S_{K^c}^* f, f \rangle -\langle S_{K^c}^* S_{K^c}f,f \rangle \\
\Rightarrow && \langle S_K f,f \rangle  - \Vert S_K f \Vert^2 = \langle S_{K^c} f,f \rangle  - \Vert S_{K^c}f \Vert^2  \\
\Rightarrow && \sum_{j \in K}\Vert \g(f) \Vert^2 - \bigg\Vert \sum_{j \in K} \gtr \g f \bigg\Vert^2 = \sum_{j \in K^c}\Vert \g(f) \Vert^2 - \bigg\Vert \sum_{j \in K^c} \gtr \g f \bigg\Vert^2.
\end{eqnarray*}
Hence we have the desired result.
\end{proof}
The next inequality for a Parseval HS-frame, appearing in Corollary \ref{th002}, is a simple consequence of Theorem \ref{th001}.
\begin{cor}\label{th002}
Let $\{ \g : j \in J \}$ be a Parseval HS-frame for $\BH$ with respect to $\BK$. Then for all $K \subset J$ and all $f \in \BH$, we have \[ \sum_{j \in K}\Vert \g(f) \Vert^2 + \bigg\Vert \sum_{j \in K^c} \gtr \g f \bigg\Vert^2 \geq \frac{3}{4} \Vert f \Vert^2. \]
\end{cor}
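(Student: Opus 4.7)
The plan is to combine the Parseval HS-frame identity just established with the elementary operator fact that $\Vert S_K f\Vert^2 + \Vert S_{K^c} f\Vert^2 \geq \tfrac{1}{2}\Vert f\Vert^2$, which follows from $S_K + S_{K^c} = I$. Introduce the shorthand
\[ a = \sum_{j\in K}\Vert \g(f)\Vert^2,\quad b = \sum_{j\in K^c}\Vert \g(f)\Vert^2,\quad u = \bigl\Vert S_K f\bigr\Vert^2,\quad v = \bigl\Vert S_{K^c} f\bigr\Vert^2. \]
The quantity we wish to bound is $a + v$.

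First, since $\{\g\}$ is a Parseval HS-frame, the lower/upper frame bounds collapse to $a + b = \Vert f\Vert^2$. Next, the identity from the preceding theorem can be rewritten as $a - u = b - v$, i.e., $a + v = b + u$. Averaging these two equal expressions yields
\[ a + v \;=\; \tfrac{1}{2}\bigl((a+v) + (b+u)\bigr) \;=\; \tfrac{1}{2}\Vert f\Vert^2 + \tfrac{1}{2}(u+v). \]

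Second, because $S = I$ for a Parseval HS-frame, one has $f = S_K f + S_{K^c} f$, and by the triangle inequality together with $(p+q)^2 \leq 2(p^2 + q^2)$,
\[ \Vert f\Vert^2 \;\leq\; 2\bigl(\Vert S_K f\Vert^2 + \Vert S_{K^c} f\Vert^2\bigr) \;=\; 2(u+v), \]
so $u + v \geq \tfrac{1}{2}\Vert f\Vert^2$. Substituting this into the previous display gives
\[ a + v \;\geq\; \tfrac{1}{2}\Vert f\Vert^2 + \tfrac{1}{4}\Vert f\Vert^2 \;=\; \tfrac{3}{4}\Vert f\Vert^2, \]
which is the claimed inequality.

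There is no real obstacle once the HS-frame identity is in hand; the argument is pure bookkeeping plus one application of the triangle inequality. An alternative route would be to reduce to Theorem \ref{th001} directly by fixing orthonormal bases of $C_2$ and rewriting the HS-frame as a genuine vector Parseval frame $\{\gtr e_{j,i}\}$ for $\BH$, but the self-contained argument above avoids that setup and uses only tools already established in this section.
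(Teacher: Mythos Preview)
Your proof is correct and follows essentially the same logical skeleton as the paper's: both combine the identity $S_K+S_{K^c}^2=S_{K^c}+S_K^2$ (which you cite from the preceding theorem, and the paper re-derives via Lemma~\ref{lem1}) with the bound $\Vert S_Kf\Vert^2+\Vert S_{K^c}f\Vert^2\ge\tfrac12\Vert f\Vert^2$. The only cosmetic difference is that the paper obtains this bound by completing the square at the operator level, $S_K^2+S_{K^c}^2=2(S_K-\tfrac12 I)^2+\tfrac12 I\ge\tfrac12 I$, whereas you get it from the triangle inequality applied to $f=S_Kf+S_{K^c}f$; both are equally valid.
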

\begin{proof}
Since $\{ \g : j \in J \}$ is a Parseval HS-frame, $S_K+S_{K^c}=I$. A simple computation shows that
\[S^2_K+S^2_{K^c}=S^2_K+\left(I-S_{K}\right)^2=2S^2_K -2 S_K+I=2 \bigg( S_K- \frac{1}{2}I \bigg)^2+\frac{1}{2} I, \]
and so \[ S^2_K+S^2_{K^c} \geq \frac{1}{2}I. \]
Since $S_K+S_{K^c}=I$, it follows that $S_K+S^2_{K^c}+S_{K^c}+S^2_{K} \geq \frac{3}{2} I.$ Notice that operators $S_K$ and $S_{K^c}$ are self-adjoint and therefore $S_{K}^*=S_{K}$, $S_{K^c}^*=S_{K^c}$.  Applying Lemma \ref{lem1} to the operators $P=S_K$ and $Q=S_{K^c}$, we obtain
\[S_K-S^2_K=S_{K^c}-S^2_{K^c} \Rightarrow S_K+S^2_{K^c}=S_{K^c}+S^2_K. \]
Thus \[ 2(S_K+S^2_{K^c})=S_K+S^2_{K^c}+S_{K^c}+S^2_{K} \geq \frac{3}{2} I .\] 
Therefore for all $f \in \BH$ we have
\[\sum_{j \in K}\Vert \g(f) \Vert^2 + \bigg\Vert \sum_{j \in K^c} \gtr \g f \bigg\Vert^2= \langle S_Kf,f \rangle+ \langle S_{K^c}f,  S_{K^c}f \rangle=\langle (S_K+S^2_{K^c})f,f \rangle  \geq \frac{3}{4}\Vert f \Vert^2.\]
This completes the proof.
\end{proof}
Now we generalize Theorems \ref{th1} and \ref{th2} to dual HS-frames. We first establish the following result.
\begin{prop}\label{pro1}
Let $P,Q \in \mathcal{L}(\BH)$ be two self-adjoint operators such that $P+Q=I$. Then for any $\lambda \in [0,1]$ and all $f \in \BH$ we have
\[ \Vert Pf \Vert^2+2 \lambda \langle Qf,f \rangle= \Vert Qf \Vert^2+2(1-\lambda)\langle Pf,f \rangle+ (2\lambda -1)\Vert f \Vert^2 \geq (1-(\lambda-1)^2)\Vert f \Vert^2. \]
\end{prop}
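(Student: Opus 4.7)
My plan is to exploit the self-adjointness of $P$ and $Q$ together with the constraint $P+Q=I$, which in particular forces $P$ and $Q$ to commute, so the whole problem reduces to manipulating polynomials in the single self-adjoint operator $P$. I will treat the equality and the inequality separately, and the equality will follow quickly from the earlier Lemma \ref{lem1}, while the inequality will come from writing a certain operator as a perfect square.

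For the equality, I would start from Lemma \ref{lem1} applied to the self-adjoint pair $(P,Q)$, which gives $P-P^{2}=Q-Q^{2}$, hence $\Vert Pf\Vert^{2}-\Vert Qf\Vert^{2}=\langle Pf,f\rangle-\langle Qf,f\rangle$ for every $f\in\BH$. Then I would form the difference
\[
\bigl(\Vert Pf\Vert^{2}+2\lambda\langle Qf,f\rangle\bigr)-\bigl(\Vert Qf\Vert^{2}+2(1-\lambda)\langle Pf,f\rangle+(2\lambda-1)\Vert f\Vert^{2}\bigr)
\]
and substitute the preceding identity, collect coefficients, and use $\langle Pf,f\rangle+\langle Qf,f\rangle=\langle(P+Q)f,f\rangle=\Vert f\Vert^{2}$ to see that the whole expression reduces to $(2\lambda-1)(\Vert f\Vert^{2}-\Vert f\Vert^{2})=0$.

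For the inequality, I would use $Q=I-P$ to rewrite the middle expression entirely in terms of $P$:
\[
\Vert Pf\Vert^{2}+2\lambda\langle Qf,f\rangle=\langle(P^{2}-2\lambda P+2\lambda I)f,f\rangle.
\]
The algebraic step that unlocks everything is the observation $P^{2}-2\lambda P+2\lambda I=(P-\lambda I)^{2}+(2\lambda-\lambda^{2})I$ together with the elementary identity $2\lambda-\lambda^{2}=1-(\lambda-1)^{2}$. Since $P-\lambda I$ is self-adjoint, $\langle(P-\lambda I)^{2}f,f\rangle=\Vert(P-\lambda I)f\Vert^{2}\geq0$, which immediately yields the desired bound $(1-(\lambda-1)^{2})\Vert f\Vert^{2}$.

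I don't anticipate a real obstacle here, since all the work is in the two algebraic identities; the main thing to be careful about is bookkeeping with the constant $(2\lambda-1)\Vert f\Vert^{2}$ when verifying the equality, and recognizing the completed square $(P-\lambda I)^{2}$ inside $P^{2}-2\lambda P+2\lambda I$ to get the sharp constant $1-(\lambda-1)^{2}$. Note that the hypothesis $\lambda\in[0,1]$ is actually not needed for the proof itself and only matters for the meaningfulness of the bound (and for the subsequent applications to dual HS-frames).
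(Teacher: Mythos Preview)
Your proposal is correct and essentially the same as the paper's proof: both reduce everything to a polynomial in the single self-adjoint operator $P$ via $Q=I-P$, arrive at $\langle(P^{2}-2\lambda P+2\lambda I)f,f\rangle$, and complete the square $(P-\lambda I)^{2}+(2\lambda-\lambda^{2})I$ to obtain the inequality. The only cosmetic difference is that for the equality the paper expands each side separately and shows both equal $\langle(P^{2}-2\lambda P+2\lambda I)f,f\rangle$, whereas you route through Lemma~\ref{lem1} and subtract, but this is the same computation repackaged.
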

\begin{proof}
We have
\begin{eqnarray*}
\Vert Pf \Vert^2+2 \lambda \langle Qf,f \rangle=\langle P^2f,f \rangle +2 \lambda \langle(I-P)f,f \rangle = \langle( P^2-2 \lambda P+2 \lambda I )f,f \rangle ,
\end{eqnarray*}
and
\begin{eqnarray*}
&& \Vert Qf \Vert^2 +2(1-\lambda)\langle Pf,f \rangle+ (2\lambda -1) \Vert f \Vert^2 \\
 &=& \langle (I-P)^2f,f \rangle +2(1-\lambda)\langle Pf,f \rangle + (2\lambda -1)\langle f,f \rangle \\
 &=& \langle( P^2-2 \lambda P+2 \lambda I )f,f \rangle \\
 &=& \langle( (P-\lambda I)^2 -\lambda^2 I +2 \lambda I )f,f \rangle \\
 &=& \langle( (P-\lambda I)^2 +(1-(\lambda-1)^2) I)f,f \rangle
  \geq  (1-(\lambda-1)^2)\Vert f \Vert^2.
\end{eqnarray*}
This proves the desired result.
\end{proof}
\begin{thm}\label{th02}
Let $\{ \g : j \in J \}$ be a HS-frame for $\BH$ with respect to $\BK$ and $\{ \gt: j \in J \}$ be the canonical dual HS-frame of $\{ \g : j \in J \}$. Then for any $\lambda \in [0,1]$, for all $K \subset J$ and all $f \in \BH$, we have
\begin{eqnarray*}
\sum_{j \in J}\Vert \gt S_{K}f \Vert^2 +\sum_{j \in K^c} \Vert \g(f)\Vert^2  &=& \sum_{j \in J}\Vert \gt S_{K^c}f \Vert^2 +\sum_{j \in K} \Vert \g(f)\Vert^2   \\
&\geq & (2 \lambda- \lambda^2) \sum_{j \in K} \Vert \g(f)\Vert^2 +(1 - \lambda^2) \sum_{j \in K^c} \Vert \g(f)\Vert^2.
\end{eqnarray*} 
\end{thm}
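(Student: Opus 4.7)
The plan is to reduce the entire statement to Proposition \ref{pro1} by conjugating the partial frame operators with $S^{-1/2}$. Concretely, set
\[ P = S^{-1/2} S_K S^{-1/2}, \qquad Q = S^{-1/2} S_{K^c} S^{-1/2}. \]
Each of $S_K$, $S_{K^c}$ and $S^{-1/2}$ is self-adjoint, so $P$ and $Q$ are self-adjoint, and the identity $S_K + S_{K^c} = S$ yields $P + Q = I$. Hence the hypotheses of Proposition \ref{pro1} are met.

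Next, I would evaluate Proposition \ref{pro1} at the vector $g = S^{1/2} f$ and translate every scalar back into HS-frame quantities. Using $\gt = \g S^{-1}$ together with the basic identity $\sum_{j \in J}\Vert \g h \Vert^2 = \langle Sh, h\rangle$ valid for every $h \in \BH$, the key reductions are
\[ \Vert Pg \Vert^2 = \langle S_K f, S^{-1} S_K f\rangle = \sum_{j \in J}\Vert \gt S_K f \Vert^2, \qquad \langle Pg, g\rangle = \langle S_K f, f\rangle = \sum_{j \in K}\Vert \g(f) \Vert^2, \]
and symmetrically with $K$ replaced by $K^c$ for $Q$, together with $\Vert g \Vert^2 = \langle Sf, f\rangle = \sum_{j \in J}\Vert \g(f) \Vert^2$.

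With these substitutions both assertions fall out. Taking $\lambda = 1/2$ in the equality of Proposition \ref{pro1} gives $\Vert Pg \Vert^2 + \langle Qg, g\rangle = \Vert Qg \Vert^2 + \langle Pg, g\rangle$, which reproduces the stated equality
\[ \sum_{j \in J}\Vert \gt S_K f \Vert^2 + \sum_{j \in K^c}\Vert \g(f) \Vert^2 = \sum_{j \in J}\Vert \gt S_{K^c} f \Vert^2 + \sum_{j \in K}\Vert \g(f) \Vert^2. \]
For the lower bound, the inequality $\Vert Pg \Vert^2 + 2\lambda\langle Qg, g\rangle \geq (2\lambda - \lambda^2)\Vert g \Vert^2$ from Proposition \ref{pro1}, after splitting $\Vert g \Vert^2 = \sum_{j \in K}\Vert \g(f) \Vert^2 + \sum_{j \in K^c}\Vert \g(f) \Vert^2$ and subtracting $(2\lambda - 1)\sum_{j \in K^c}\Vert \g(f) \Vert^2$ from both sides, yields exactly $(2\lambda - \lambda^2)\sum_{j \in K}\Vert \g(f) \Vert^2 + (1 - \lambda^2)\sum_{j \in K^c}\Vert \g(f) \Vert^2$ on the right. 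I expect the only real obstacle is the choice of conjugation: the naive attempt $P = S_K$, $Q = S_{K^c}$ fails because $P + Q = S \neq I$ in general, and sandwiching by $S^{-1/2}$ is what simultaneously normalizes $P + Q$ to $I$ and, when squaring $\Vert P g\Vert$, produces an $S^{-1}$-weighted inner product that reassembles as the canonical dual $\gt$.
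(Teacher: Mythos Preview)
Your proposal is correct and follows essentially the same route as the paper: conjugate by $S^{-1/2}$ to obtain self-adjoint $P,Q$ with $P+Q=I$, apply Proposition~\ref{pro1} at $g=S^{1/2}f$, and translate each term back using $\langle S^{-1}S_K f,S_K f\rangle=\sum_{j\in J}\Vert \gt S_K f\Vert^2$ and $\langle S_K f,f\rangle=\sum_{j\in K}\Vert \g(f)\Vert^2$. The only cosmetic difference is that you isolate the equality by specializing to $\lambda=1/2$ and then handle the inequality separately, whereas the paper carries a general $\lambda$ through a single chain of implications and lets it cancel in the equality; your version is arguably a bit cleaner, but the substance is identical.
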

\begin{proof}
Let $S$ be the frame operator for $\{ \g : j \in J \}$. Since $S_k+S_{K^c}=S$, it follows that \[S^{-1/2}S_KS^{-1/2}+S^{-1/2}S_{K^c}S^{-1/2}=I. \] 
Considering $P=S^{-1/2}S_KS^{-1/2}$, $Q=S^{-1/2}S_{K^c}S^{-1/2}$, and $S^{1/2}f$ instead of $f$ in Proposition \ref{pro1}, we obtain
\begin{eqnarray}\label{eq001}
\nonumber
&& \Vert S^{-1/2}S_Kf \Vert^2+2 \lambda \langle S^{-1/2}S_{K^c}f,S^{1/2}f \rangle \\ &=& \Vert S^{-1/2}S_{K^c}f \Vert^2+2(1-\lambda)\langle S^{-1/2}S_K f,S^{1/2}f \rangle+ (2\lambda -1)\Vert S^{1/2}f \Vert^2 \nonumber \\ \nonumber
& \geq & (1-(\lambda-1)^2)\Vert S^{1/2}f \Vert^2 \nonumber \\ \nonumber
\Rightarrow && \langle S^{-1}S_Kf, S_Kf \rangle +2 \lambda \langle S_{K^c}f,f \rangle \\ \nonumber &=& \langle S^{-1}S_{K^c}f, S_{K^c}f \rangle+2(1-\lambda)\langle S_K f,f \rangle+ (2\lambda -1)\langle Sf,f \rangle \\ \nonumber
& \geq & (2 \lambda-\lambda^2)\langle Sf,f \rangle \\ \nonumber
\Rightarrow && \langle S^{-1}S_Kf, S_Kf \rangle \\ \nonumber &=& \langle S^{-1}S_{K^c}f, S_{K^c}f \rangle +2\langle S_K f,f \rangle -2\lambda \langle (S_K+S_{K^c}) f,f \rangle+ (2\lambda -1)\langle Sf,f \rangle \\ \nonumber
& \geq & (2 \lambda-\lambda^2) \langle Sf,f \rangle - 2 \lambda \langle S_{K^c}f,f \rangle \\ \nonumber
\Rightarrow && \langle S^{-1}S_Kf, S_Kf \rangle  = \langle S^{-1}S_{K^c}f, S_{K^c}f \rangle +2\langle S_K f,f \rangle -\langle Sf,f \rangle \\ \nonumber
& \geq & 2 \lambda \langle S_Kf,f \rangle - \lambda^2 \langle Sf,f \rangle   \\ \nonumber
\Rightarrow && \langle S^{-1}S_Kf, S_Kf \rangle +\langle S_{K^c}f,f \rangle  = \langle S^{-1}S_{K^c}f, S_{K^c}f \rangle +\langle S_K f,f \rangle \\ 
& \geq & (2 \lambda- \lambda^2) \langle S_Kf,f \rangle+(1 - \lambda^2) \langle S_{K^c}f,f \rangle .
\end{eqnarray} 
We have 
\begin{eqnarray}\label{eq1}
\langle S^{-1}S_Kf, S_Kf \rangle =\langle SS^{-1}S_Kf, S^{-1}S_Kf \rangle & =& \langle \sum_{j \in J}\gtr \g S^{-1}S_Kf, S^{-1}S_Kf \rangle \nonumber \\
 & = & \sum_{j \in J} \left[ \g S^{-1}S_Kf, \g S^{-1}S_Kf \right]_{\tau} \nonumber \\
 & = & \sum_{j \in J} \left[ \gt S_Kf, \gt S_Kf \right]_{\tau} \nonumber \\
  & = & \sum_{j \in J} \Vert \gt S_Kf \Vert^2 .
\end{eqnarray} 
Similarly 
\begin{eqnarray} \label{eq2}
\langle S^{-1}S_{K^c}f, S_{K^c}f \rangle= \sum_{j \in J} \Vert \gt S_{K^c}f \Vert^2 .
\end{eqnarray}
\begin{eqnarray}\label{eq3}
\langle S_{K^c}f,f \rangle = \sum_{j \in K^c} \Vert \g(f)\Vert^2 .
\end{eqnarray}
\begin{eqnarray}\label{eq4}
\langle S_{K}f,f \rangle = \sum_{j \in K} \Vert \g(f)\Vert^2 .
\end{eqnarray}
Using equations \eqref{eq1}-\eqref{eq4} in the inequality (\ref{eq001}), we obtain
\begin{eqnarray*}
\sum_{j \in J}\Vert \gt S_{K}f \Vert^2 +\sum_{j \in K^c} \Vert \g(f)\Vert^2  &=& \sum_{j \in J}\Vert \gt S_{K^c}f \Vert^2 +\sum_{j \in K} \Vert \g(f)\Vert^2   \\
&\geq & (2 \lambda- \lambda^2) \sum_{j \in K} \Vert \g(f)\Vert^2 +(1 - \lambda^2) \sum_{j \in K^c} \Vert \g(f)\Vert^2 .
\end{eqnarray*}
This completes the proof.
\end{proof}
\begin{prop}\label{prop2}
If $P,Q \in \mathcal{L}(\BH)$ satisfy $P+Q=I$, then for any $\lambda \in [0,1]$ and all $f \in \BH$ we have
\[ P^{*}P+\lambda(Q^{*}+Q)=Q^{*}Q+(1-\lambda)(P^{*}+P)+(2\lambda -1)I \geq (1-(\lambda-1)^2)I. \]
\end{prop}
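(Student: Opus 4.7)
The plan is to prove Proposition \ref{prop2} as the operator-valued generalization of Proposition \ref{pro1}, which dropped the self-adjointness hypothesis on $P$ and $Q$. The argument is purely algebraic: use $Q = I - P$ (so $Q^* = I - P^*$) to rewrite everything in terms of $P$ and $P^*$, then complete an operator square.

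\textbf{Step 1: The equality.} I would substitute $Q = I - P$ into the right-hand side. Expanding
\[ Q^*Q = (I-P^*)(I-P) = I - P - P^* + P^*P, \]
and combining with $(1-\lambda)(P^*+P) + (2\lambda-1)I$, the $P^*$ and $P$ terms collect to $-\lambda(P^*+P)$, and the constant term becomes $2\lambda I$. This gives
\[ Q^*Q + (1-\lambda)(P^*+P) + (2\lambda-1)I = P^*P - \lambda(P^*+P) + 2\lambda I. \]
On the other hand, since $Q^*+Q = 2I - P^* - P$, the left-hand side satisfies
\[ P^*P + \lambda(Q^*+Q) = P^*P - \lambda(P^*+P) + 2\lambda I, \]
so the two sides agree.

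\textbf{Step 2: The inequality.} Starting from the common expression $P^*P - \lambda(P^*+P) + 2\lambda I$, I would complete the square to obtain
\[ P^*P - \lambda P^* - \lambda P + \lambda^2 I + (2\lambda - \lambda^2)I = (P - \lambda I)^*(P - \lambda I) + (2\lambda - \lambda^2)I. \]
Since $(P-\lambda I)^*(P-\lambda I) \geq 0$ as an operator, this yields
\[ P^*P + \lambda(Q^*+Q) \geq (2\lambda - \lambda^2)I = (1 - (\lambda-1)^2)I, \]
which is the desired lower bound.

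There is no real obstacle here — the result is a one-line completion-of-square argument once the substitution $Q = I - P$ has been made. The only thing to watch is bookkeeping of constants (the identity $2\lambda - \lambda^2 = 1 - (\lambda-1)^2$ is what makes the stated bound match the completed-square form), and noting that the hypothesis $\lambda \in [0,1]$ is not actually used for the equality or the inequality themselves; it would only become relevant if one wanted to assert $(2\lambda-\lambda^2) \geq 0$.
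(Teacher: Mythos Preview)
Your proof is correct and follows essentially the same route as the paper: substitute $Q = I - P$ to reduce both sides to the common expression $P^*P - \lambda(P^*+P) + 2\lambda I$, then complete the square as $(P-\lambda I)^*(P-\lambda I) + (1-(\lambda-1)^2)I$ to obtain the lower bound. Your additional observation that the hypothesis $\lambda \in [0,1]$ is not actually needed for the stated equality and inequality is accurate and a nice remark.
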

\begin{proof}
We have
\begin{eqnarray*}
P^{*}P+\lambda(Q^{*}+Q)=P^{*}P+\lambda(I-P^{*}+I-P)=P^{*}P-\lambda(P^{*}+P)+2\lambda I ,
\end{eqnarray*}
and 
\begin{eqnarray*}
 Q^{*}Q+(1-\lambda)(P^{*}+P)+(2\lambda -1)I &=&(I-P^{*})(I-P)+(1-\lambda)(P^{*}+P)+(2\lambda -1)I \\
&=& P^{*}P-\lambda(P^{*}+P)+2 \lambda I \\
&=& (P-\lambda I)^{*}(P-\lambda I)+(1-(\lambda-1)^2)I \\
& \geq & (1-(\lambda-1)^2)I .
\end{eqnarray*}
Hence the result follows.
\end{proof}
\begin{thm}\label{th03}
Let $\{ \g : j \in J \}$ be a HS-frame for $\BH$ with respect to $\BK$ and $\{ \ga: j \in J \}$ be an alternate dual HS-frame of $\{ \g : j \in J \}$. Then for any $\lambda \in [0,1]$, for all $K \subset J$ and all $f \in \BH$, we have
\begin{eqnarray*}
&& Re \bigg\{ \sum_{j \in K^c} \big[ \ga (f),\g (f) \big]_{\tau} \bigg\}+ \bigg\Vert \sum_{j \in K} \gtr \ga (f) \bigg\Vert^2 \\
&=& Re \bigg\{ \sum_{j \in K} \big[ \ga (f),\g (f) \big]_{\tau} \bigg\}+ \bigg\Vert \sum_{j \in K^c} \gtr \ga (f) \bigg\Vert^2 \\
& \geq & (2 \lambda - \lambda^2) Re \bigg\{ \sum_{j \in K} \big[ \ga (f),\g (f) \big]_{\tau} \bigg\}+(1-\lambda^2) Re \bigg\{ \sum_{j \in K^c} \big[ \ga (f),\g (f) \big]_{\tau} \bigg\}.
\end{eqnarray*}
\end{thm}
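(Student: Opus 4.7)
The plan is to apply Proposition \ref{prop2} to the bounded linear operators
\[
P := \sum_{j \in K} \gtr \ga, \qquad Q := \sum_{j \in K^c} \gtr \ga
\]
on $\BH$. The alternate dual identity \eqref{eq01} says precisely that $P + Q = I$, so the hypothesis of Proposition \ref{prop2} is met. Note that no self-adjointness of $P$ and $Q$ is required here, which is exactly what lets the argument cover an arbitrary alternate dual rather than only the canonical one.

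Pairing Proposition \ref{prop2} against $f \in \BH$, using $\langle (P^* + P)f, f\rangle = 2\,Re\langle Pf, f\rangle$ and $\langle (Q^* + Q)f, f\rangle = 2\,Re\langle Qf, f\rangle$, together with the identity $1 - (\lambda - 1)^2 = 2\lambda - \lambda^2$, produces
\[
\|Pf\|^2 + 2\lambda\,Re\langle Qf, f\rangle = \|Qf\|^2 + 2(1-\lambda)\,Re\langle Pf, f\rangle + (2\lambda-1)\|f\|^2 \geq (2\lambda - \lambda^2)\|f\|^2.
\]
Next, the quantities on the operator side are translated into those appearing in the statement. Using the adjoint relation $\langle \gtr \ga f, f\rangle_{\BH} = [\ga f, \g f]_\tau$ in $C_2$ and summing gives $\langle Pf, f\rangle = \sum_{j\in K}[\ga f, \g f]_\tau$ and $\langle Qf, f\rangle = \sum_{j\in K^c}[\ga f, \g f]_\tau$, while $\|Pf\|^2$ and $\|Qf\|^2$ are already the squared norms that appear in the theorem.

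The remaining step is bookkeeping. Taking real parts of the scalar identity $\langle (P+Q)f, f\rangle = \|f\|^2$ yields $\|f\|^2 = Re\langle Pf, f\rangle + Re\langle Qf, f\rangle$. Substituting this for the $(2\lambda-1)\|f\|^2$ term in the equality above makes the coefficient of $Re\langle Pf, f\rangle$ collapse to $1$ and that of $Re\langle Qf, f\rangle$ to $-1$, producing $\|Pf\|^2 + Re\langle Qf, f\rangle = \|Qf\|^2 + Re\langle Pf, f\rangle$, which is the asserted equality. The same substitution applied to the lower bound $(2\lambda - \lambda^2)\|f\|^2$ rewrites it as $(2\lambda-\lambda^2)[Re\langle Pf, f\rangle + Re\langle Qf, f\rangle]$; transferring the $\lambda^2\,Re\langle Qf, f\rangle$ piece across then gives exactly $(2\lambda - \lambda^2)Re\langle Pf, f\rangle + (1-\lambda^2)Re\langle Qf, f\rangle$, as claimed.

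The main (and essentially the only) obstacle is this coefficient bookkeeping: since $P, Q$ need not be self-adjoint, neither $\langle Pf, f\rangle$ nor $\langle Qf, f\rangle$ is a priori real, so one must retain $Re$ throughout and use that $Re\langle Pf, f\rangle + Re\langle Qf, f\rangle = \|f\|^2$ is nevertheless real and equal to $\|f\|^2$. Beyond Proposition \ref{prop2}, no further estimate is needed; the theorem is really just a careful scalar-level unfolding of that operator inequality.
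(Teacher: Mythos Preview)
Your proof is correct and follows essentially the same route as the paper: define $P=F_K=\sum_{j\in K}\gtr\ga$ and $Q=F_{K^c}$, invoke Proposition~\ref{prop2} since $P+Q=I$ by \eqref{eq01}, pair with $f$, and then perform the same coefficient bookkeeping using $\|f\|^2=Re\langle Pf,f\rangle+Re\langle Qf,f\rangle$ to reduce to the stated equality and lower bound. Your write-up is somewhat more streamlined, but the argument is identical in substance.
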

\begin{proof}
For $K \subset J$ and $f \in \BH$, define the operator $F_{k}$ by $F_{k}f=\sum\limits_{j \in K}\gtr \ga f.$ Then the series converges unconditionally and $F_K \in \mathcal{L}(\BH)$. By (\ref{eq01}), we have $F_K+F_{K^c}=I$. By Proposition \ref{prop2}, we get
\begin{eqnarray*}
  && (1-(\lambda-1)^2)\Vert f \Vert^2 \leq \langle F_K^{*}F_K f,f \rangle +\lambda \langle(F_{K^c}^{*}+F_{K^c})f,f \rangle \\ &=& \langle F_{K^c}^{*}F_{K^c}f,f \rangle + (1-\lambda) \langle (F_K^{*}+F_K)f,f \rangle +(2\lambda -1)\Vert  f \Vert^2 \\
\Rightarrow && (2 \lambda - \lambda^2) Re(\langle If,f \rangle) \leq \Vert F_K f \Vert^2 +\lambda ( \overline{\langle F_{K^c}f,f \rangle } +\langle F_{K^c}f,f \rangle ) \\
&=& \Vert F_{K^c} f \Vert^2 +(1 - \lambda) ( \overline{\langle F_K f,f \rangle } +\langle F_K f,f \rangle )+(2\lambda -1)\Vert  f \Vert^2 \\
\Rightarrow && (2 \lambda - \lambda^2) Re(\langle (F_K+F_{K^c})f,f \rangle) \leq \Vert F_K f \Vert^2 +2 \lambda Re (\langle F_{K^c}f,f \rangle ) \\
&=& \Vert F_{K^c} f \Vert^2 + 2 (1- \lambda) Re (\langle F_K f,f \rangle) +(2\lambda -1)\Vert  f \Vert^2 \\
\Rightarrow && (2 \lambda - \lambda^2) Re(\langle F_Kf,f \rangle) -\lambda^2 Re(\langle F_{K^c}f,f \rangle)  \leq
\Vert F_K f \Vert^2 \\
&=& \Vert F_{K^c} f \Vert^2 +2 Re (\langle F_K f,f \rangle) - Re(\langle If,f \rangle) \\
\Rightarrow && (2 \lambda - \lambda^2) Re(\langle F_Kf,f \rangle) -\lambda^2 Re(\langle F_{K^c}f,f \rangle)  \leq \Vert F_K f \Vert^2 \\
&=& \Vert F_{K^c} f \Vert^2 +2 Re (\langle F_K f,f \rangle) - Re(\langle (F_K+F_{K^c})f,f \rangle) \\
\Rightarrow && (2 \lambda - \lambda^2) Re(\langle F_Kf,f \rangle)+(1 -\lambda^2) Re(\langle F_{K^c}f,f \rangle)  \leq \Vert F_K f \Vert^2 + Re (\langle F_{K^c }f,f \rangle) \\
&=& \Vert F_{K^c} f \Vert^2 + Re (\langle F_K f,f \rangle) .
\end{eqnarray*}
We have
\begin{eqnarray*}
\langle F_Kf,f \rangle=\langle \sum\limits_{j \in K}\gtr \ga f,f \rangle=\sum\limits_{j \in K} \big[ \ga f, \g f \big]_{\tau} .\\
\langle F_{K^c}f,f \rangle=\sum\limits_{j \in K^c} \big[ \ga f, \g f \big]_{\tau} .
\end{eqnarray*}
So finally 
\begin{eqnarray*}
&& Re \bigg\{ \sum_{j \in K^c} \big[ \ga (f),\g (f) \big]_{\tau} \bigg\}+ \bigg\Vert \sum_{j \in K} \gtr \ga (f) \bigg\Vert^2 \\
&=& Re \bigg\{ \sum_{j \in K} \big[ \ga (f),\g (f) \big]_{\tau} \bigg\}+ \bigg\Vert \sum_{j \in K^c} \gtr \ga (f) \bigg\Vert^2 \\
& \geq & (2 \lambda - \lambda^2) Re \bigg\{ \sum_{j \in K} \big[ \ga (f),\g (f) \big]_{\tau} \bigg\}+(1-\lambda^2) Re \bigg\{ \sum_{j \in K^c} \big[ \ga (f),\g (f) \big]_{\tau} \bigg\} .
\end{eqnarray*}
This completes the proof.
\end{proof}
\begin{rmk}
If we consider $\lambda =1/2$ in Theorem \ref{th02} and Theorem \ref{th03}, then we obtain the similar inequalities as in Theorem \ref{th05} and Theorem \ref{th2} respectively, with scalar $3/4$.
\end{rmk}
Next we give a simplified presentation of Theorem \ref{th3} for HS-frames, which generalizes Theorem \ref{th03} to a more general form that does not involve the real parts of the complex numbers. We first establish the following result.
\begin{lem}\label{lem2}
If $P,Q \in \mathcal{L}(\BH)$ such that $P+Q=I$, then $P+Q^*Q=Q^*+P^{*}P.$
\end{lem}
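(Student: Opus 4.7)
The plan is to observe that this is essentially a restatement of Lemma \ref{lem1}, obtained by moving the ``squared'' terms to the opposite sides. Specifically, Lemma \ref{lem1} already established that under the hypothesis $P+Q=I$ one has
\[ P - P^{*}P = Q^{*} - Q^{*}Q. \]
Adding $P^{*}P + Q^{*}Q$ to both sides immediately yields $P + Q^{*}Q = Q^{*} + P^{*}P$, which is exactly the claimed identity.

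If one prefers a direct, self-contained derivation (to avoid invoking Lemma \ref{lem1}), the approach is to substitute $Q = I - P$ and $Q^{*} = I - P^{*}$ into the right-hand side and expand:
\[ Q^{*} + P^{*}P = (I - P^{*}) + P^{*}P, \]
and similarly compute
\[ P + Q^{*}Q = P + (I-P^{*})(I-P) = P + I - P - P^{*} + P^{*}P = I - P^{*} + P^{*}P. \]
Both sides reduce to the same expression $I - P^{*} + P^{*}P$, so equality holds.

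There is no real obstacle here: the statement is a one-line algebraic manipulation in $\mathcal{L}(\BH)$ that does not require any spectral or positivity arguments, nor does it need $P$ or $Q$ to be self-adjoint (unlike the later applications in Theorem \ref{th03}). I expect the author will present it in the form of a direct calculation analogous to the proof of Lemma \ref{lem1}, and the lemma will then be used in the sequel to derive the HS-frame identity of Theorem \ref{th3}-type, where it plays the role of separating the ``cross terms'' without passing to real parts.
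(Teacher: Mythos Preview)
Your proposal is correct and essentially matches the paper's own proof: the paper performs the direct substitution $Q=I-P$, $Q^*=I-P^*$ and reduces both sides to $(I-P^*)+P^*P$, exactly as in your second, self-contained derivation. Your first route via Lemma~\ref{lem1} is an equally valid one-line rearrangement, but the paper opts for the standalone calculation.
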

\begin{proof}
By simple computation, we obtain \[P+Q^*Q=P+(I-P^*)(I-P)=(I-P^*)+P^*P=Q^*+P^{*}P,\]
which is as required.
\end{proof}
\begin{thm}\label{th5}
Let $\{ \g : j \in J \}$ be a HS-frame for $\BH$ with respect to $\BK$ and $\{ \ga: j \in J \}$ be an alternate dual HS-frame of $\{ \g : j \in J \}$, then for every $K \subset J$ and every $f \in \BH$, we have
\[ \bigg( \sum_{j \in K^c} \big[ \ga (f),\g (f) \big]_{\tau} \bigg)+ \bigg\Vert \sum_{j \in K} \gtr \ga (f) \bigg\Vert^2 
= \overline{\bigg( \sum_{j \in K} \big[ \ga (f),\g (f) \big]_{\tau} \bigg)}+ \bigg\Vert \sum_{j \in K^c} \gtr \ga (f) \bigg\Vert^2 \]
\end{thm}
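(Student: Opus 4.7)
The plan is to mimic the proof of Theorem \ref{th03}, but replacing Proposition \ref{prop2} with the simpler operator identity in Lemma \ref{lem2}, which is precisely the ingredient the authors set up for this purpose.

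First I would introduce the bounded operator $F_K \in \mathcal{L}(\BH)$ by $F_K f = \sum_{j \in K} \gtr \ga f$. The reconstruction formula (\ref{eq01}) for the alternate dual HS-frame gives at once $F_K + F_{K^c} = I$. Then I would apply Lemma \ref{lem2} with $P = F_{K^c}$ and $Q = F_K$ (note the order chosen so that the conjugate ends up on the $K$-sum, as demanded by the statement), producing the operator identity
\[ F_{K^c} + F_K^{*}F_K = F_K^{*} + F_{K^c}^{*}F_{K^c}. \]

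Next I would pair both sides with $f$ in the $\BH$-inner product. Since $\langle F_K^{*}F_K f, f \rangle = \Vert F_K f \Vert^2$, $\langle F_{K^c}^{*}F_{K^c} f, f \rangle = \Vert F_{K^c} f \Vert^2$, and $\langle F_K^{*} f, f \rangle = \overline{\langle F_K f, f \rangle}$, this yields
\[ \langle F_{K^c} f, f \rangle + \Vert F_K f \Vert^2 = \overline{\langle F_K f, f \rangle} + \Vert F_{K^c} f \Vert^2. \]

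Finally I would translate each term back to the HS-frame notation. The adjoint relation between $\g : \BH \to C_2$ and $\gtr : C_2 \to \BH$ gives $\langle \gtr \ga f, f \rangle_\BH = [\ga f, \g f]_\tau$, so summing over $K$ (resp.\ $K^c$) produces
\[ \langle F_K f, f \rangle = \sum_{j \in K} \big[ \ga(f), \g(f) \big]_\tau, \qquad \langle F_{K^c} f, f \rangle = \sum_{j \in K^c} \big[ \ga(f), \g(f) \big]_\tau, \]
while $\Vert F_K f \Vert^2 = \bigl\Vert \sum_{j \in K} \gtr \ga(f) \bigr\Vert^2$ and similarly for $K^c$. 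Substituting these gives exactly the claimed equality. There is no real obstacle here; the only subtlety is choosing the assignment $P = F_{K^c}$, $Q = F_K$ in Lemma \ref{lem2} so that the complex conjugate falls on the $K$-sum, matching the asymmetric form in the statement.
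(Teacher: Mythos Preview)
Your proposal is correct and is essentially identical to the paper's own proof: the paper also defines $F_K$ as in Theorem~\ref{th03}, uses $F_K+F_{K^c}=I$, applies Lemma~\ref{lem2} (implicitly with $P=F_{K^c}$, $Q=F_K$) to obtain $\langle F_{K^c}f,f\rangle+\langle F_K^*F_Kf,f\rangle=\langle F_K^*f,f\rangle+\langle F_{K^c}^*F_{K^c}f,f\rangle$, and then translates the terms back via $\langle F_Kf,f\rangle=\sum_{j\in K}[\ga(f),\g(f)]_\tau$. Your remark about choosing the roles of $P$ and $Q$ so that the conjugate lands on the $K$-sum is exactly the (unstated) choice the paper makes.
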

\begin{proof}
For $K \subset J$ and $f \in \BH$, we define the operator $F_{k}$ as in Theorem \ref{th03}. Therefore, we have $F_K+F_{K^c}=I$. By Lemma \ref{lem2}, we have
\begin{eqnarray*}
\bigg( \sum_{j \in K^c} \big[ \ga (f),\g (f) \big]_{\tau} \bigg)+ \bigg\Vert \sum_{j \in K} \gtr \ga (f) \bigg\Vert^2 &=& \langle F_{K^c}f,f \rangle+\langle F_K^* F_K f,f \rangle \\
&=& \langle F_K^*f,f \rangle+\langle F_{K^c}^* F_{K^c} f,f \rangle \\
&=& \overline{ \langle F_K f,f \rangle}+\Vert F_{K^c} f \Vert^2 \\
&=& \overline{\bigg( \sum_{j \in K} \big[ \ga (f),\g (f) \big]_{\tau} \bigg)}+ \bigg\Vert \sum_{j \in K^c} \gtr \ga (f) \bigg\Vert^2.
\end{eqnarray*}
Hence the relation stated in the theorem holds.
\end{proof}
\begin{thm}\label{th6}
Let $\{ \g : j \in J \}$ be a HS-frame for $\BH$ with respect to $\BK$ and $\{ \ga: j \in J \}$ be an alternate dual HS-frame of $\{ \g : j \in J \}$. Then for every bounded sequence $\{w_j :j \in J\}$ and every $f \in \BH$, we have
\begin{eqnarray*}
 && \bigg( \sum_{j \in J} w_j \big[ \ga (f),\g (f) \big]_{\tau} \bigg)+ \bigg\Vert \sum_{j \in J} (1-w_j) \gtr \ga (f) \bigg\Vert^2  \\
&=& \overline{\bigg( \sum_{j \in J}(1-w_j) \big[ \ga (f),\g (f) \big]_{\tau} \bigg)}+ \bigg\Vert \sum_{j \in J} w_j \gtr \ga (f) \bigg\Vert^2.
\end{eqnarray*}
\end{thm}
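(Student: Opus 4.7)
The plan is to mirror the proof of Theorem \ref{th5}, but with the characteristic function of $K$ replaced by a general bounded scalar sequence $\{w_j\}$. Accordingly, I would define
\[
F_w f = \sum_{j \in J} w_j \gtr \ga f, \qquad F_{1-w} f = \sum_{j \in J}(1-w_j) \gtr \ga f,
\]
which play the role of $F_K$ and $F_{K^c}$ from Theorem \ref{th03}. The first thing to verify is that these series converge unconditionally and define bounded linear operators on $\BH$; for $f,g\in\BH$ one has
\[
\Big| \sum_j w_j \langle \gtr \ga f, g\rangle \Big| = \Big| \sum_j w_j [\ga f,\g g]_\tau \Big| \leq \Big(\sup_j |w_j|\Big)\Big(\sum_j \|\ga f\|_2^2\Big)^{1/2}\Big(\sum_j \|\g g\|_2^2\Big)^{1/2},
\]
and since $\{\g\}$ and $\{\ga\}$ are HS-frames, both Bessel-type sums on the right are dominated by multiples of $\|f\|^2$ and $\|g\|^2$ respectively. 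This yields $F_w, F_{1-w}\in\mathcal{L}(\BH)$, and termwise summation in \eqref{eq01} then gives $F_w + F_{1-w} = I$.

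With these operators in hand, I would apply Lemma \ref{lem2} with $P = F_w$ and $Q = F_{1-w}$ to obtain
\[
F_w + F_{1-w}^{*}F_{1-w} = F_{1-w}^{*} + F_w^{*}F_w.
\]
Pairing against $f$ on both sides and using $\langle F_{1-w}^{*} f, f\rangle = \overline{\langle F_{1-w} f, f\rangle}$ yields
\[
\langle F_w f, f\rangle + \|F_{1-w} f\|^2 = \overline{\langle F_{1-w} f, f\rangle} + \|F_w f\|^2.
\]

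The final step is to translate back to the HS-frame expressions. Using the identity $\langle \gtr \ga f, f\rangle = [\ga f, \g f]_\tau$ together with the (justified) interchange of the scalars $w_j$ and $1-w_j$ with the inner product, the left-hand pairing becomes $\sum_j w_j[\ga f, \g f]_\tau$ and the conjugated pairing becomes $\overline{\sum_j (1-w_j)[\ga f,\g f]_\tau}$, which matches the statement exactly.

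The main obstacle is the first step: justifying that $F_w$ and $F_{1-w}$ are well-defined bounded operators and that the reconstruction identity survives weighting by a general bounded (not necessarily $\ell^1$ or $\ell^2$) sequence. Once this is settled, the rest is a direct application of Lemma \ref{lem2} — essentially the same algebraic identity used to prove Theorem \ref{th5}, now no longer tied to a subset $K$.
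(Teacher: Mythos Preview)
Your proposal is correct and follows essentially the same route as the paper: define the weighted operators $F$ and $G$ (your $F_w$ and $F_{1-w}$), note that they are bounded with $F+G=I$, apply Lemma~\ref{lem2}, and pair against $f$. The only difference is that you spell out the Cauchy--Schwarz estimate for boundedness, whereas the paper simply asserts that the series converge unconditionally and that $F,G\in\mathcal{L}(\BH)$.
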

\begin{proof}
We define the operators $Ff=\sum\limits_{j \in J}w_j \gtr \ga f$ and $Gf=\sum\limits_{j \in J}(1-w_j) \gtr \ga f$. Note that both series converge unconditionally. Also we have $F, G \in \mathcal{L}(\BH)$ and $F+G=I.$ By Lemma \ref{lem2}, we have
\begin{eqnarray*}
&& \bigg( \sum_{j \in J} w_j \big[ \ga (f),\g (f) \big]_{\tau} \bigg)+ \bigg\Vert \sum_{j \in J} (1-w_j) \gtr \ga (f) \bigg\Vert^2 \\ &=& \langle Ff,f \rangle+\langle G^* G f,f \rangle \\
&=& \langle G^*f,f \rangle+\langle F^* F f,f \rangle \\
&=& \overline{ \langle G f,f \rangle}+\Vert Ff \Vert^2 \\
&=& \overline{\bigg( \sum_{j \in J}(1-w_j) \big[ \ga (f),\g (f) \big]_{\tau} \bigg)}+ \bigg\Vert \sum_{j \in J} w_j \gtr \ga (f) \bigg\Vert^2.
\end{eqnarray*}
Hence the relation holds.
\end{proof}
Observe that if we consider $K \subset J$ and
\[ w_j =
\left\{
	\begin{array}{ll}
		0  & \mbox{if } j \in K \\
		1 & \mbox{if } j \in K^c,
	\end{array}
\right. \] then Theorem \ref{th5} follows from Theorem \ref{th6}.
 
\section*{Acknowledgments}
The author is deeply indebted to Prof. Radu Balan for several valuable suggestions concerning Theorem \ref{th02} and Theorem \ref{th6}. The author also wishes to thank Dr. J. Swain for several fruitful discussions. The author is grateful to the Ministry of Human Resource Development, India for providing the research fellowship and Indian Institute of Technology Guwahati, India for the support provided during the period of this work. Further, the author thanks the anonymous referee for valuable suggestions which helped to improve the paper.

\bibliographystyle{plain}

\end{document}